\journalname{}
\newcommand{\R}{\mathbb{R}}
\newcommand{\D}{\mathbb{D}}
\newcommand{\B}{\mathbb{B}}
\newcommand{\G}{\mathbf{G}}
\newcommand{\A}{\mathbf{A}}
\newcommand{\V}{c\mathbf{V}}
\newcommand{\set}{\mathbb{S}}
\newcommand{\Pe}{\mathbb{P}}
\newcommand{\one}{\mathbf{1}}
\newcommand{\us}{u_{\ast}}
\newcommand{\p}{\mathring{p}_{\B}}
\DeclareMathOperator{\pspan}{pspan}
\DeclareMathOperator{\cm}{cm}
\DeclareMathOperator{\spann}{span}
\DeclareMathOperator{\argminop}{argmin}
\newcommand{\argmin}[1]{\underset{#1}{\argminop}}
\begin{document}
\title{\centering A Deterministic Algorithm to Compute the Cosine Measure of a Finite Positive Spanning Set}
\author{Warren Hare \and Gabriel Jarry-Bolduc}
\institute{Warren Hare \at
              Department of Mathematics, University of British Columbia, Kelowna, \\
              British Columbia, Canada.  Hare's research is partially funded by the Natural Sciences and Engineering Research Council (NSERC) of Canada, Discover Grant \#2018-03865.\\
              \email{warren.hare@ubc.ca}           
           \and
          Gabriel Jarry-Bolduc\at
           Department of Mathematics, University of British Columbia, Kelowna, \\
              British Columbia, Canada.  Jarry-Bolduc's research is partially funded through the Natural Sciences and Engineering Research Council (NSERC) of Canada, Discover Grant \#2018-03865.\\
              \email{gabjarry@alumni.ubc.ca}
              }
\date{\today}

\maketitle
\begin{abstract}
Originally developed in 1954, positive bases and positive spanning sets have been found to be a valuable concept in derivative-free optimization (DFO). The quality of a positive basis (or positive spanning set) can be quantified via the {\em cosine measure} and convergence properties of certain DFO algorithms are intimately linked to the value of this measure. However, it is unclear how to compute the cosine measure for a positive basis from the definition. In this paper, a deterministic algorithm to compute the cosine measure of any positive basis or finite positive spanning set is provided. The algorithm is proven to return the exact value of the cosine measure in finite time.
\end{abstract}
\keywords{Cosine measure \and Positive basis \and Positive spanning set }

\section{Introduction}

Positive bases have been studied since the 1950s. The theory has been first developed by Davis in \cite{Davis1954} and McKinney in \cite{Mckinney1962}. In the last few decades, their popularity has drastically increased due to their value in DFO. The value of positive bases in derivative-free optimization was  revealed in 1996  \cite{Lewis1996} when it was shown that if the gradient of a function at a point exist and is nonzero, then there exists a vector $d$ in any positive basis (or any positive spanning set) such that $d$ is a descent direction of the function at that point.

 Since then, several  derivative-free algorithms using positive bases have been developed. More specifically, positive bases are employed in direct search methods such as pattern search \cite{Custodio2008,Torczon1997,Vaz2009}, generalised pattern search \cite{Audet2017}, grid-based methods \cite{Coope2001,Coope2002}, generating set search \cite{Kolda2003}, mesh adaptive  direct search \cite{Abramson2009,Audet2006,Audet2014,vandykeasaki2013}  and implicit filtering \cite{Kelley2011}.

 A handful of papers have focused on the theory behind positive bases and their characterization \cite{Davis1954,Mckinney1962,Regis2016}. Davis established that the maximal size $s$ of a positive basis is $s=2n$ \cite{Davis1954}. A shorter proof of this result was published in 2011 by Audet \cite{Audet2011}. Also, it is straightforward to show that the minimal size $s$ of a positive basis is $s=n+1$ \cite{Davis1954}.  Minimal and maximal positive bases are now well-understood and their structure can  be rigorously characterized  \cite{Regis2016}. A few results on intermediate positive bases ($n+1<s<2n$) can  be found in  \cite{Reay1965,Reay1966,Romanowicz1987,Shephard1971}.

 With regards to DFO, the key instrument to measure the quality of a positive basis is called cosine measure \cite{Kolda2003} (see also \cite{Torczon1997}). In general, having higher cosine measure is preferable and can be thought as covering the space more uniformly with the vectors  contained in the positive basis. However, no methods have been proposed thus far  to calculate the cosine measure for a given positive basis (or a given positive spanning set).

This paper provides a deterministic algorithm to compute the cosine measure of any given positive basis (or any finite positive spanning set). The  algorithm is proven to return the exact value of the cosine measure in finite time.  Hence, this paper  provides a procedure to compare positive bases to each other.

This paper is organized as follows. In Section \ref{Sec:Prel}, fundamental material on positive spanning sets, positive bases and background results are presented. In Section \ref{Sec:MyResults}, the algorithm and a proof that it returns the exact cosine measure for any finite positive spanning set are presented.  Section \ref{sec:complexity} examines the complexity of the algorithm and demonstrates that the algorithm can be shortened for minimal positive bases and maximal positive bases. Lastly, Section \ref{sec:conclusion} summarizes the main achievements of the paper and proposes some directions to explore in a near future.
\section{Preliminaries}\label{Sec:Prel}

In this paper, it will be convenient to regard a set of vectors as a matrix whose columns are the vectors in the set.  The vector space $\R^n$ is assumed for the entire paper.  The span of a set $\set$ in $\R^n$ is denoted $\spann(\set).$
\begin{definition}[Positive span and positive spanning set of $\R^n$]
The positive span of a finite set of vectors $\set=\begin{bmatrix}d_1& d_2&\cdots&  d_s \end{bmatrix}$ in $\R^n$, denoted $\pspan(\set)$, is the set $$\{v \in \R^n: v=\alpha_1d_1+ \dots+\alpha_s d_s, \alpha_i\geq 0, i=1, 2, \dots, s\}.$$
A finite positive spanning set of $\R^n$ of size $s$, denoted $\Pe^n_s,$ is a set of $s$ nonzero vectors such that $\pspan(\Pe^n_s)=\R^n.$
\end{definition}

 To define a positive basis of $\R^n$ requires the concept of \emph{positive independence}.
\begin{definition}[Positive independence]
A set of vectors $\set=\begin{bmatrix}d_1& d_2&\cdots&  d_s \end{bmatrix}$ in $\R^n$  is positively independent  if and only if $d_i \notin \pspan(\set\setminus {d_i})$ for all $i \in \{1, 2, \dots s\}.$
\end{definition}
\begin{definition}[Positive basis of $\R^n$]\label{def:pbasis}
A positive basis of $\R^n$ of size $s$, denoted $\D^n_s$, is a positively independent set of $s$ vectors whose positive span is $\R^n$.
\end{definition}

Equivalently, a positive basis of $\R^n$ can be defined as a set of nonzero vectors of $\R^n$ whose positive span is $\R^n,$ but for which no proper subset exhibits the same property.

The following theorem describes the structure of maximal positive bases.

\begin{theorem}\cite[Theorem 6.3]{Regis2016}\label{thm:maxbasis}
Suppose $\B=\begin{bmatrix}d_1&d_2&\cdots&d_n\end{bmatrix}$ is a basis of $\R^n.$ Then for any choice of $\delta_1, \dots, \delta_n>0$, the set $\D^n_{2n}=\begin{bmatrix}d_1&\cdots&d_n&-\delta_1 d_1&\cdots&-\delta_n d_n \end{bmatrix}$ is a positive basis of $\R^n.$  Conversely, every maximal positive basis of $\R^n$ has the form $(\D')^n_{2n}=\begin{bmatrix}d_1&\cdots&d_n&-\delta_1 d_1&\cdots&-\delta_n d_n \end{bmatrix}$ up to reordering of the vectors, where $\B'=\begin{bmatrix}d_1&\cdots&d_n\end{bmatrix}$ is a basis of $\R^n$ and $\delta_1, \dots, \delta_n>0.$
\end{theorem}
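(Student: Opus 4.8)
The statement has two directions. The plan is to prove the forward direction constructively and the converse by reducing to a dimension argument on the two halves of the set.

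For the forward direction, suppose $\B=\begin{bmatrix}d_1&\cdots&d_n\end{bmatrix}$ is a basis and fix $\delta_1,\dots,\delta_n>0$. I must show the $2n$ vectors $d_1,\dots,d_n,-\delta_1 d_1,\dots,-\delta_n d_n$ form a positive basis, i.e. (i) their positive span is $\R^n$ and (ii) they are positively independent. For (i), take any $v\in\R^n$; since $\B$ is a basis, write $v=\sum_i c_i d_i$ with $c_i\in\R$. For each $i$, if $c_i\geq 0$ keep the term $c_i d_i$; if $c_i<0$ write $c_i d_i=\frac{-c_i}{\delta_i}(-\delta_i d_i)$, a nonnegative multiple of $-\delta_i d_i$. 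This exhibits $v$ as a nonnegative combination, so $\pspan=\R^n$. For (ii), I would argue that no vector lies in the positive span of the others. The key point is that a nonnegative combination of the remaining $2n-1$ vectors, when re-expressed in the basis $\B$, has a prescribed sign pattern in each coordinate $d_i$: dropping, say, $d_1$ leaves only $-\delta_1 d_1$ able to contribute to the $d_1$-coefficient, forcing that coefficient to be $\le 0$, which cannot equal the strictly positive coefficient $1$ of $d_1$. The same sign-obstruction argument, applied coordinatewise using the uniqueness of coordinates in a basis, handles each of the $2n$ vectors.

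For the converse, suppose $(\D')^n_{2n}$ is an arbitrary maximal positive basis. First I would invoke the structural facts about positive bases established earlier: a positive spanning set must contain a basis of $\R^n$ (otherwise its span, hence positive span, would be a proper subspace), so after reordering I may assume $d_1,\dots,d_n$ are linearly independent and set $\B'=\begin{bmatrix}d_1&\cdots&d_n\end{bmatrix}$. The heart of the converse is to show the remaining $n$ vectors are exactly the negative multiples $-\delta_i d_i$. Here I would use the characterization of maximal size: since every maximal positive basis has exactly $2n$ elements and no proper subset positively spans $\R^n$, the minimality/positive-independence forces a rigid pairing. Expressing each remaining vector $d_{n+j}$ in the basis $\B'$, I would show that its coordinate vector must be a negative multiple of a single standard basis vector; any other sign pattern would either create a positive dependence (contradicting positive independence) or allow a proper subset to still positively span (contradicting maximality/minimal generation).

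The main obstacle is the rigidity step in the converse: arguing that the second block of $n$ vectors must be \emph{diagonal} negative multiples rather than some more general configuration that still happens to positively span with exactly $2n$ vectors. I expect the cleanest route is a counting/dimension argument: to positively span $\R^n$ one needs each open halfspace through the origin to contain a vector, and with only $n$ linearly independent vectors $d_1,\dots,d_n$ already fixed, the requirement that each coordinate direction $-d_i$ be positively attainable, combined with positive independence ruling out redundancy, pins down each extra vector to lie along a single $-d_i$ axis. Making this pinning-down fully rigorous — excluding ``off-axis'' vectors that conspire to cover the missing directions — is the delicate part, and I would lean on Theorem~\ref{thm:maxbasis}'s own assumption that the configuration is already known to be a maximal positive basis to close the gap by comparing the $2n$-element positive-independence constraints against the $n$ degrees of freedom in each coordinate expansion.
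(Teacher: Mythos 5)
The paper never proves this statement: it is imported verbatim from \cite[Theorem 6.3]{Regis2016}, so there is no internal proof to compare against, and your attempt has to stand on its own. Your forward direction does: writing $v=\sum_i c_i d_i$ and splitting the coefficients by sign gives positive spanning, and your coordinate-uniqueness argument gives positive independence (deleting $d_1$, any nonnegative combination of the remaining $2n-1$ vectors has $d_1$-coordinate $-\beta_1\delta_1\le 0\ne 1$; deleting $-\delta_1 d_1$, the $d_1$-coordinate is $\alpha_1\ge 0\ne -\delta_1<0$). That half is complete and correct.

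The converse, however, contains a genuine gap, and it sits exactly where you flag it. After extracting a basis $\{d_1,\dots,d_n\}$ from the positive spanning set (fine), the claim that each remaining vector must be a negative multiple of a distinct $d_i$ \emph{is} the theorem; ``comparing the positive-independence constraints against the degrees of freedom in each coordinate expansion'' is a hope, not an argument. The core difficulty is that positive independence by itself places no penalty on off-axis vectors: the minimal positive basis $\{d_1,\dots,d_n,-(d_1+\cdots+d_n)\}$ is positively independent and its last vector is maximally off-axis, so any valid proof must genuinely consume the hypothesis that the set has exactly $2n$ elements, and no mere sign-pattern or dimension count does that. Concretely, if some remaining vector were $w=-d_1-d_2$, then $\{d_1,d_2,w\}$ already positively spans the plane $\spann\{d_1,d_2\}$, and you must show that the other $2n-3$ vectors cannot complete this to a positively independent positive spanning set of $\R^n$; deriving that contradiction classically requires an induction on dimension (project the remaining vectors onto the orthogonal complement of $\spann\{d_1,d_2\}$ and track what positive spanning plus cardinality force there), or equivalently the Reay--McKinney decomposition theorem, which says a positive basis of size $n+k$ splits into minimal positive bases of $k$ subspaces whose direct sum is $\R^n$ --- for $k=n$ every such subspace is a line, which is precisely the pairing you want. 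None of this machinery, nor any substitute for it, appears in your outline; ``leaning on the theorem's own assumption'' only hands you the hypotheses (positive independence, positive spanning, cardinality $2n$) that the missing argument still has to exploit.
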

\begin{proposition}\cite[Theorem 2.3]{Regis2016} \label{Prop:Removing}
If $\set=\begin{bmatrix}d_1&d_2&\cdots&d_s \end{bmatrix}$ positively spans $\R^n$, then $\set \setminus\{d_i\}$ linearly spans $\R^n$ for any $i\in\{1, \dots, s\}$.
\end{proposition}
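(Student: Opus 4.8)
The plan is to argue by contradiction. Fix the index $i$ and suppose that $\set\setminus\{d_i\}$ fails to linearly span $\R^n$. Then $V:=\spann(\set\setminus\{d_i\})$ is a proper subspace of $\R^n$, so its orthogonal complement $V^\perp$ is nonzero, and I would select a nonzero vector $w\in V^\perp$. By construction $\langle w,d_j\rangle=0$ for every $j\neq i$, which is the property I will exploit.

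The core of the argument is that positive spanning imposes a sign condition that a single surviving direction cannot meet. After replacing $w$ by $-w$ if necessary, I may assume $\langle w,d_i\rangle\geq 0$. Since $\pspan(\set)=\R^n$, the vector $-w$ admits a representation $-w=\sum_j \alpha_j d_j$ with every $\alpha_j\geq 0$. Pairing both sides with $w$ and using that $w$ is orthogonal to each $d_j$ with $j\neq i$ collapses the sum to a single term, giving $-\|w\|^2=\alpha_i\langle w,d_i\rangle$. The right-hand side is a product of two nonnegative quantities and is therefore $\geq 0$, whereas the left-hand side is strictly negative because $w\neq 0$. This contradiction establishes that $\set\setminus\{d_i\}$ must linearly span $\R^n$.

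The delicate point, and the step I expect to require the most care, is guaranteeing that the chosen normal direction $w$ actually interacts with $d_i$: the identity $-\|w\|^2=\alpha_i\langle w,d_i\rangle$ is only contradictory once the sign of $\langle w,d_i\rangle$ has been controlled. The normalization $\langle w,d_i\rangle\geq 0$ above handles this, but the cleanest alternative is to take $w$ to be the orthogonal projection of $d_i$ onto $V^\perp$. If $d_i\notin V$, this projection is nonzero and satisfies $\langle w,d_i\rangle=\|w\|^2>0$ directly; if instead $d_i\in V$, then every column of $\set$ lies in $V$, forcing $\pspan(\set)\subseteq V\subsetneq\R^n$ and contradicting the hypothesis immediately. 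Either route reduces the entire statement to the elementary observation that a nonnegative combination of vectors orthogonal to $w$ (together with $d_i$) cannot reproduce a vector whose inner product with $w$ is negative.
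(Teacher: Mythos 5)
Your proof is correct, and there is nothing in the paper to compare it against directly: the paper states this proposition as a quoted result from \cite{Regis2016} and gives no proof of its own. The standard argument behind the cited result is constructive rather than by contradiction: since $\pspan(\set)=\R^n$, one writes $-d_i=\sum_{j=1}^{s}\alpha_j d_j$ with all $\alpha_j\geq 0$, and rearranging gives $(1+\alpha_i)d_i=-\sum_{j\neq i}\alpha_j d_j$ with $1+\alpha_i>0$, so $d_i\in\spann(\set\setminus\{d_i\})$ and hence $\spann(\set\setminus\{d_i\})=\spann(\set)=\R^n$. Your route instead uses a separating normal vector: a nonzero $w$ orthogonal to $\set\setminus\{d_i\}$, normalized so that $w^\top d_i\geq 0$, makes the nonnegative representation $-w=\sum_j \alpha_j d_j$ collapse to $-\Vert w\Vert^2=\alpha_i\, w^\top d_i\geq 0$, a contradiction. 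Both arguments are elementary and complete; the constructive version buys an explicit representation of $d_i$ in terms of the remaining vectors, while yours isolates the geometric obstruction (a nonzero vector orthogonal to all surviving directions cannot coexist with positive spanning), which is essentially the same device the paper itself uses later in the proof of Proposition \ref{prop:spanrn}, where a kernel vector of the active set yields a contradiction with the definition of the cosine measure. Your attention to the sign of $w^\top d_i$ is exactly the right care point, and both of your fixes (flipping $w$, or taking $w$ to be the projection of $d_i$ onto the orthogonal complement, with the degenerate case $d_i\in V$ handled separately) are sound.
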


The \emph{cosine measure} of a set, the \emph{cosine vector set} and the \emph{active set} is defined next.

\begin{definition}[Cosine Measure]\label{def:cosinemeasure} The cosine measure of a positive spanning set $\Pe^n_s$  is defined by $$\cm(\Pe^n_s)=\min_{\substack{\Vert u \Vert=1\\u \, \in \, \R^n}} \max_{d \, \in \, \Pe^n_s} \frac{u^\top d}{\Vert d \Vert}.$$
\end{definition}

\begin{definition}[The cosine vector set]
Let $\Pe^n_s$ be a positive spanning set of $\R^n.$ The cosine vector set of $\Pe^n_s$, denoted $\V(\Pe^n_s)$, is defined as $$\V(\Pe^n_s)=\argmin{\substack{\Vert u \Vert=1\\u \, \in \, \R^n}}\max_{d\, \in\, \Pe^n_s} \frac{u^\top d}{\Vert d \Vert}.$$
\end{definition}
\begin{definition}[The active set of vectors]
Let $\Pe^n_s$ be a positive spanning set  of $\R^n$ and let $\us \in \V(\Pe^n_s).$ The active set of $\us$ in $\Pe^n_s,$ denoted $\A(\us,\Pe^n_s)$, is defined as $$\A(\us,\Pe^n_s)=\left \{\frac{d^\top}{\Vert d \Vert} \in \Pe^n_s:\frac{d^\top \us}{\Vert d \Vert}=\cm(\Pe^n_s) \right \}.$$
\end{definition}


Note that given any finite positive spanning set $\Pe^n_s$ where $n\geq 2$, the cosine measure is bounded by $0< \cm(\Pe^n_s)<1.$ To prove these bounds, next recall a theorem that helps to prove the lower bound.
\begin{theorem}\cite[Theorem 2.3]{Conn2009} \label{Theorem:DotProd}
Let $\set=\begin{bmatrix}d_1&d_2&\cdots&d_s\end{bmatrix}$ be a set of nonzero vectors in $\R^n$. Then $\set$ is a positive spanning set of $\R^n$ if and only if the following holds:
\begin{itemize}
    \item[i.] For every nonzero vector $v$ in $\R^n$, there exists an index $i \in \{1,2, \dots, s\}$ such that $v^\top d_i<0.$
    \item[ii.] For every nonzero vector $w$ in $\R^n$, there exists an index $j \in \{1,2, \dots, s\}$ such that $w^\top d_j>0.$
\end{itemize}
\end{theorem}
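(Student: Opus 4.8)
The plan is to prove the two implications separately, first observing that conditions (i) and (ii) are logically equivalent: applying the statement of (ii) to the vector $-v$ gives exactly (i), since $(-v)^\top d_j > 0$ is the same as $v^\top d_j < 0$, and the reverse substitution gives (ii) from (i). Hence in each direction it suffices to establish one of the two conditions, and I will work primarily with (ii).

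For the forward direction, suppose $\set$ positively spans $\R^n$. Fix a nonzero $w \in \R^n$. Since $\pspan(\set) = \R^n$, there exist coefficients $\alpha_i \ge 0$ with $w = \sum_{i=1}^s \alpha_i d_i$. Taking the inner product with $w$ yields $\|w\|^2 = \sum_{i=1}^s \alpha_i\,(w^\top d_i) > 0$. Because every $\alpha_i \ge 0$ while the total is strictly positive, at least one summand must be positive, forcing $w^\top d_j > 0$ for some $j$; this is (ii). Applying the identical argument to the nonzero vector $-v$ yields (i). I expect this direction to be routine.

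For the reverse direction I would argue by contraposition: assume $\set$ is \emph{not} a positive spanning set and produce a nonzero vector violating (ii). The key structural fact is that $\pspan(\set) = \cone(\set)$ is a finitely generated cone, hence a closed convex set containing the origin. If it is a proper subset of $\R^n$, choose a point $p \notin \cone(\set)$ and invoke the separating hyperplane theorem to obtain a nonzero $a$ with $a^\top p > a^\top x$ for every $x \in \cone(\set)$. Using that the cone is invariant under nonnegative scaling (so $a^\top x$ cannot be positive anywhere on the cone without being unbounded above, while $0$ lies in the cone), I would conclude $a^\top x \le 0$ for all $x \in \cone(\set)$, and in particular $a^\top d_i \le 0$ for every generator $d_i$. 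This directly contradicts (ii) applied to $w = a$, completing the proof.

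The main obstacle I anticipate lies in the reverse direction, specifically the two supporting facts that make the separation argument rigorous: that a finitely generated cone is \emph{closed} (so the separating hyperplane theorem for a point and a closed convex set is applicable), and the homogeneity argument that upgrades the strict bound $a^\top p > a^\top x$ to the clean conclusion $a^\top d_i \le 0$ for all $i$. Once closedness and the scaling invariance of $\cone(\set)$ are in hand, the contradiction with (ii) is immediate, and the forward direction together with the equivalence of (i) and (ii) is comparatively mechanical.
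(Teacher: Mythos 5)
Your proposal is correct, but note that the paper itself contains no proof of this statement: it is stated with the citation \cite[Theorem 2.3]{Conn2009} and used as a known result, so there is no internal argument to compare against. Judged on its own merits, your proof is sound and self-contained. The reduction of (i) and (ii) to a single condition via $v \mapsto -v$ is valid; the forward direction, expanding $\Vert w \Vert^2 = \sum_{i=1}^s \alpha_i (w^\top d_i) > 0$ and noting that a strictly positive sum with nonnegative weights must contain a strictly positive term, is exactly right; and the reverse direction by contraposition---strictly separating a point $p \notin \pspan(\set)$ from the closed convex cone $\cone(\set) = \pspan(\set)$, then using positive homogeneity and the fact that $0$ lies in the cone to upgrade the separation to $a^\top x \le 0$ for all $x$ in the cone, hence $a^\top d_i \le 0$ for every generator---is the standard separation argument for this equivalence. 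The one supporting fact you flag, closedness of finitely generated cones, is genuinely needed (strict separation of a point from a convex set requires the set to be closed) and is not completely trivial, but it is classical (a finitely generated cone is polyhedral, hence closed), so your proof is complete once that result is invoked. One small stylistic remark: since you establish at the outset that (i) and (ii) are equivalent, the cleanest packaging is to state that the theorem reduces to ``positive spanning $\iff$ (ii)'' and prove only that biconditional, which is exactly what your two directions accomplish.
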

\begin{proposition} \label{Prop:CosMeasure}

Let $\Pe^n_s=\begin{bmatrix}d_1&d_2&\cdots&d_s\end{bmatrix}$ be a finite positive spanning set of $ \R^n$ (where $n\geq 2$). Then the cosine measure of $\Pe^n_s$ is bounded by $$0<\cm(\Pe^n_s)<1.$$
\end{proposition}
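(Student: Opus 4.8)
The plan is to establish the two inequalities separately, relying throughout on the observation that for any unit vector $u$ the quantity $\frac{u^\top d}{\Vert d \Vert} = u^\top\!\left(\frac{d}{\Vert d \Vert}\right)$ is the inner product of two unit vectors. By Cauchy--Schwarz each such quantity therefore lies in $[-1,1]$, and it equals $1$ exactly when $u = d/\Vert d \Vert$. These two facts drive the lower and upper bounds respectively.

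For the lower bound, I would first argue that the minimum defining $\cm(\Pe^n_s)$ is actually attained. The inner function $u \mapsto \max_{d \in \Pe^n_s} \frac{u^\top d}{\Vert d \Vert}$ is a pointwise maximum of finitely many linear functions of $u$, hence continuous, and the unit sphere in $\R^n$ is compact; by the extreme value theorem there is a minimizer $\us$ with $\Vert \us \Vert = 1$. Since $\us \neq 0$, Theorem \ref{Theorem:DotProd}(ii) supplies an index $j$ with $\us^\top d_j > 0$, and therefore $\cm(\Pe^n_s) = \max_{d \in \Pe^n_s} \frac{\us^\top d}{\Vert d \Vert} \geq \frac{\us^\top d_j}{\Vert d_j \Vert} > 0$.

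For the upper bound, I would show the minimum is strictly below $1$ by exhibiting a unit vector at which the inner maximum is less than $1$. By the equality case of Cauchy--Schwarz noted above, a unit vector $u$ satisfies $\max_{d \in \Pe^n_s} \frac{u^\top d}{\Vert d \Vert} = 1$ if and only if $u$ coincides with one of the finitely many normalized directions $d_i/\Vert d_i \Vert$. Because $n \geq 2$, the unit sphere is infinite, so there exists a unit vector $u_0$ distinct from every $d_i/\Vert d_i \Vert$; for this $u_0$ the inner maximum is strictly less than $1$, whence $\cm(\Pe^n_s) \leq \max_{d \in \Pe^n_s} \frac{u_0^\top d}{\Vert d \Vert} < 1$. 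This is precisely where the hypothesis $n \geq 2$ enters, and it cannot be dropped: in dimension one the positive spanning set $\begin{bmatrix} 1 & -1 \end{bmatrix}$ has cosine measure equal to $1$.

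The main obstacle is the lower bound. Theorem \ref{Theorem:DotProd}(ii) only guarantees, for each \emph{fixed} direction, the existence of a vector making a positive inner product; this yields pointwise positivity of the inner maximum but not immediately a strictly positive infimum over the sphere. The essential step is therefore the compactness argument guaranteeing that the infimum is attained, so that pointwise positivity \emph{at the minimizer} $\us$ can be upgraded to strict positivity of $\cm(\Pe^n_s)$.
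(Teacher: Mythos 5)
Your proof is correct and takes essentially the same approach as the paper's: the lower bound comes from Theorem \ref{Theorem:DotProd} and the upper bound from exhibiting a unit vector distinct from every normalized direction $d_i/\Vert d_i \Vert$, which exists because the set is finite while the unit sphere in $\R^n$ ($n \geq 2$) is infinite. The only difference is that you spell out the compactness/attainment argument for the minimizer, a detail the paper's one-line lower-bound argument leaves implicit; this fills a genuine gap in rigor but does not change the route.
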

\begin{proof}
Without loss of generality, assume that $d_i$ are unit vectors for all $i \in \{1, 2, \dots, s\}$. Since $\Pe^n_s$ is a positive spanning set of $\R^n$, the cosine measure of $\Pe^n_s$ must be positive by Theorem \ref{Theorem:DotProd}.

Consider the upper bound. Since $\Pe^n_s$ is a finite set of vectors, there exist a nonzero  unit vector $u$ such that $u\neq d_i$ for all $i \in \{1, 2, \dots, s\}.$ Since the dot product of unit vectors is equal to 1 if and only if the two vectors are equal, it follows that $u^\top d_i<1$ for all $i \in \{1, 2, \dots, s\}$ and so  $\max_{d \, \in \, \Pe^n_s}u^\top d<1.$ Therefore the cosine measure  $\cm(\Pe^n_s)<1.$
\qed
\end{proof}

Note the fact that $\Pe^n_s$ is a positive spanning set is not used for the upper bound, only the fact that $\Pe^n_s$ is a finite set. Lastly, the definition of \emph{Gram matrices} and two lemmas that are helpful in Section \ref{Sec:MyResults} are introduced.

\begin{definition}[Gram matrix]
Let $\set=\begin{bmatrix}d_1&d_2& \cdots& d_s\end{bmatrix}$ be vectors in $\R^n$ with dot product $d_i^\top d_j$. The Gram matrix of the vectors $d_1, d_2, \dots, d_s$ with respect to the dot product, denoted $\G(\set),$ is an $s \times s$ real matrix where the entry $\G(\set)_{i,j}$ is defined as $\G(\set)_{i,j}=d_i^\top d_j, i,j \in \{1, 2, \dots, s\}$.
\end{definition}

\begin{lemma}\cite[Lemma 1]{Naevdal2018} \label{Lem:gamma} Let $\B=\begin{bmatrix}d_1&d_2&\cdots& d_n\end{bmatrix}$ be a basis of unit vectors in $\R^n$. Let $\one \in \R^n$ be the vector having all its entries equal to one. Then there exists a unit vector $u_\B \in \R^n$ such that $u_\B^\top d_i=\gamma_\B>0$ for all $i \in \{1, 2, \dots, n\}$ where $$\gamma_\B=\frac{1}{\sqrt{\one^\top \G(\B)^{-1}\one}}.$$
\end{lemma}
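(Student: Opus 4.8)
The plan is to construct the claimed unit vector $u_\B$ explicitly and then verify it has the desired property. The key observation is that the condition $u_\B^\top d_i = \gamma_\B$ for all $i$ can be rewritten in matrix form. Writing $\B$ as the matrix whose columns are $d_1, \dots, d_n$, the system of equations $u_\B^\top d_i = \gamma_\B$ for all $i \in \{1, \dots, n\}$ is exactly $\B^\top u_\B = \gamma_\B \one$. Since $\B$ is a basis, $\B^\top$ is invertible, so this forces $u_\B = \gamma_\B (\B^\top)^{-1}\one$ once $\gamma_\B$ is fixed. Thus the proof reduces to choosing $\gamma_\B > 0$ so that this $u_\B$ is a unit vector.

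First I would impose the normalization $\Vert u_\B \Vert = 1$ and solve for $\gamma_\B$. Computing the squared norm gives
\begin{equation*}
\Vert u_\B \Vert^2 = u_\B^\top u_\B = \gamma_\B^2\, \one^\top (\B)^{-1}(\B^\top)^{-1}\one = \gamma_\B^2\, \one^\top (\B^\top \B)^{-1}\one.
\end{equation*}
Here the Gram matrix enters: since the columns $d_i$ are unit vectors, $\G(\B) = \B^\top \B$, so $\Vert u_\B \Vert^2 = \gamma_\B^2\, \one^\top \G(\B)^{-1}\one$. Setting this equal to $1$ and solving yields precisely $\gamma_\B = 1/\sqrt{\one^\top \G(\B)^{-1}\one}$, which matches the statement. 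To take the positive square root I need $\one^\top \G(\B)^{-1}\one > 0$; this holds because $\G(\B)$ is symmetric positive definite (being the Gram matrix of a linearly independent set), hence so is $\G(\B)^{-1}$, and therefore the quadratic form evaluated at the nonzero vector $\one$ is strictly positive. This also guarantees $\gamma_\B$ is real and well-defined.

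The last step is to confirm that the constructed $u_\B$ actually satisfies $u_\B^\top d_i = \gamma_\B$ for each $i$, which is immediate by reversing the matrix computation: $\B^\top u_\B = \gamma_\B \B^\top (\B^\top)^{-1}\one = \gamma_\B \one$, and reading off the $i$-th entry gives $u_\B^\top d_i = \gamma_\B$. Since $\gamma_\B > 0$, the desired positivity holds for every $i$.

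I do not anticipate a serious obstacle here, as the argument is essentially a direct computation once the right object $u_\B = \gamma_\B(\B^\top)^{-1}\one$ is guessed. The only point requiring genuine care is justifying positive definiteness of $\G(\B)$ so that the square root is legitimate and $\gamma_\B$ is positive; this rests on the linear independence of the basis vectors, which ensures $\B$ has full rank and hence $\G(\B) = \B^\top\B$ is positive definite rather than merely positive semidefinite.
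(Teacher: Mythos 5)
Your proof is correct. The paper itself gives no proof of this lemma (it is quoted from \cite[Lemma 1]{Naevdal2018}), but your explicit construction $u_\B = \gamma_\B \B^{-\top}\one$ with $\gamma_\B = 1/\sqrt{\one^\top \G(\B)^{-1}\one}$ is precisely the formula the paper itself uses in step (1.2) of Algorithm \ref{alg:cmpbasis}, and your verification via $\G(\B)=\B^\top\B$ and positive definiteness is complete; the only cosmetic remark is that the identity $\G(\B)=\B^\top\B$ holds for any set of column vectors, not just unit vectors, so that hypothesis is not actually needed at that step.
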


Note that the unit vector $u_\B$ such that $u_\B^\top d_i=\gamma_{\B}$ for all $i$'s is unique since $\{d_1, \dots, d_n\}$ is a set of $n$ linearly independent vectors. Also, note that $\gamma_\B<1$ whenever $n\geq 2.$

In fact, the positive value of the $n$ equal dot products is unique whenever $u$ is a unit vector and $\{d_1, \dots, d_n\}$ is a set of linearly independent vectors.

\begin{lemma} \label{lem:gammab}
Let $\B=\begin{bmatrix}d_1&d_2&\cdots& d_n\end{bmatrix}$ be a basis of unit vectors in $\R^n$. Suppose $u$ is a unit vector such that $u^\top d_1=\dots=u^\top d_n=\alpha>0.$ Then $\alpha=\gamma_\B,$ where $\gamma_\B$ is defined in Lemma \ref{Lem:gamma}.
\end{lemma}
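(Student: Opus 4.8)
The plan is to translate the hypothesis into a single matrix equation and solve it explicitly. Writing $\B$ for the invertible matrix whose columns are the unit vectors $d_1,\dots,d_n$, the $n$ scalar conditions $u^\top d_i=\alpha$ are exactly the components of the vector equation $\B^\top u=\alpha\one$. Because $\B$ is a basis, $\B^\top$ is invertible, so this equation pins $u$ down in terms of $\alpha$, namely $u=\alpha(\B^\top)^{-1}\one=\alpha(\B^{-1})^\top\one$.

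Next I would impose that $u$ is a unit vector. Computing $u^\top u$ from this expression and using the identity $\B^{-1}(\B^{-1})^\top=(\B^\top\B)^{-1}$, together with the observation that $\G(\B)=\B^\top\B$ (which holds precisely because the $(i,j)$ entry of $\B^\top\B$ is $d_i^\top d_j$), yields $u^\top u=\alpha^2\,\one^\top\G(\B)^{-1}\one$. Setting this equal to $1$ gives $\alpha^2=1/(\one^\top\G(\B)^{-1}\one)$.

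Finally, since $\G(\B)=\B^\top\B$ is symmetric positive definite, so is $\G(\B)^{-1}$, and hence $\one^\top\G(\B)^{-1}\one>0$, so the right-hand side is a well-defined positive quantity. Taking the positive square root, which is the correct branch because $\alpha>0$ by hypothesis, produces $\alpha=1/\sqrt{\one^\top\G(\B)^{-1}\one}=\gamma_\B$, as claimed.

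I do not anticipate a serious obstacle here; the only points that require care are verifying the algebraic identity $\B^{-1}(\B^{-1})^\top=\G(\B)^{-1}$ and confirming the positivity $\one^\top\G(\B)^{-1}\one>0$ so that the square root is meaningful and the sign of $\alpha$ selects the intended value. As an alternative route, one could instead appeal to the uniqueness observation following Lemma \ref{Lem:gamma}: the set of solutions of $\B^\top u=\alpha\one$ as $\alpha$ ranges over $\R$ is a line through the origin, which contains exactly one unit vector having a positive value of $\alpha$; since Lemma \ref{Lem:gamma} exhibits $u_\B$ as such a vector with value $\gamma_\B$, uniqueness forces $u=u_\B$ and therefore $\alpha=\gamma_\B$.
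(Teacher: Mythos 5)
Your proposal is correct, but it follows a genuinely different route from the paper. The paper never solves for $u$ explicitly: it proves uniqueness of the positive value $\alpha$ abstractly, by supposing two unit vectors $u$ and $u'$ achieve constant dot products $\alpha$ and $\alpha'$, expanding each vector in the basis $\B$, and multiplying the expansions by $u^\top$ and $u'^\top$ to obtain $\bar{\rho}\alpha=1$, $\bar{\beta}\alpha'=1$, and $\bar{\rho}\alpha'=\bar{\beta}\alpha=u^\top u'$, whence $\frac{\alpha'}{\alpha}=\frac{\alpha}{\alpha'}$ and $\alpha=\alpha'$; it then invokes Lemma \ref{Lem:gamma} (the existence of a unit vector attaining the value $\gamma_\B$) to conclude $\alpha=\gamma_\B$. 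You instead translate the hypothesis into $\B^\top u=\alpha\one$, invert, and impose $\Vert u\Vert=1$ to derive the closed form $\alpha=1/\sqrt{\one^\top\G(\B)^{-1}\one}$ directly. Your algebra checks out: $\G(\B)=\B^\top\B$ because the $d_i$ are the columns of $\B$, the identity $\B^{-1}(\B^{-1})^\top=(\B^\top\B)^{-1}$ is valid, and positive definiteness of $\G(\B)$ (hence of its inverse) guarantees $\one^\top\G(\B)^{-1}\one>0$, so the positive square root is the right branch given $\alpha>0$. What your computation buys: it is self-contained, re-deriving the formula of Lemma \ref{Lem:gamma} rather than citing its existence statement, and it identifies the vector explicitly as $u=\alpha(\B^\top)^{-1}\one$, which is precisely the formula used in step (1.2) of Algorithm \ref{alg:cmpbasis}, so it doubles as a justification of that step. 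What the paper's argument buys: it avoids matrix inversion entirely and shows that uniqueness alone, combined with the already-proven Lemma \ref{Lem:gamma}, settles the claim. Your closing alternative (the solution set of $\B^\top u=\alpha\one$ over all $\alpha$ is a line through the origin containing exactly one unit vector with positive $\alpha$) is essentially a cleaner geometric repackaging of the paper's own uniqueness argument.
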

\begin{proof}
It suffices to show $\alpha>0$ is unique, so that Lemma \ref{Lem:gamma} implies it must be the value $\gamma_\B.$

Suppose there exists two distinct unit vectors, say $u$ and $u'$ such that
 $$u^\top d_1=\dots= u^\top d_n=\alpha >0 ~\mbox{and}~ u'^\top d_1=\dots= u'^\top d_n=\alpha' >0.$$  Since $\B$ is a basis, it follows that $\alpha \neq \alpha'$ and there exists $\rho_1, \dots, \rho_n \in \R$ such that $\sum_{i=1}^n \rho_i d_i = u$.  Multiplying both sides by by $u'^\top$, shows that $\sum_{i=1}^n \rho_i \alpha' = u'^\top u$.  Alternately, multiplying both sides by $u^\top$ yields $\sum_{i=1}^n \rho_i \alpha=1$.  Letting $\bar{\rho}=\sum_{i=1}^n\rho_i$, provides
     $$\bar{\rho}\alpha'=u^\top u' ~\mbox{and}~ \bar{\rho}\alpha=1.$$
 Similarly, since $\B$ is a basis, there exist $\beta_1, \dots, \beta_n \in \R$ such that $\sum_{i=1}^n \beta_i d_i = u'$.  Letting $\bar{\beta}=\sum_{i=1}^n\beta_i$ and multiplying this by $u^\top$ and $u'^\top$, yields
    $$\bar{\beta}\alpha=u^\top u' ~\mbox{and}~ \bar{\beta}\alpha'=1.$$
Applying $\bar{\rho} = 1/\alpha$ and $\bar{\beta}=1/\alpha'$ into the second equality yields 	$\frac{\alpha'}{\alpha} = \frac{\alpha}{\alpha'}.$
Since $\alpha > 0$ and $\alpha'>0$, this yields $\alpha=\alpha'$.
\qed
\end{proof}

\section{Main results}\label{Sec:MyResults}
In this section, an algorithm that calculates the cosine  measure for any finite positive spanning set $\Pe^n_s$ of $\R^n$ (or any positive basis $\D^n_s$ of $\R^n$) is provided. After introducing the algorithm, it is shown that the algorithm returns the exact value of the cosine measure.
\begin{center}
\scalebox{1}{
\begin{algorithm}[H]
\DontPrintSemicolon
\caption{The cosine measure of a finite positive spanning set of $\R^n$ \label{alg:cmpbasis}}
Given $\Pe^n_s$, a finite positive spanning set of $\R^n$:

         \textbf{1. For all bases $\B\subset \Pe^n_s$, compute\;  }
         {
  $ \displaystyle
    \begin{aligned}
      (1.1)&\quad\gamma_\B=\frac{1}{\sqrt{\one^\top \G(\B)^{-1}\one}} &&\text{(The positive value of the $n$ equal dot products)},\\
      (1.2)&\quad u_\B=\B^{-\top}\gamma_\B \one &&\text{(The unit vector associated to $\gamma_{\B}$)},\\
      (1.3)&\quad p_\B=\begin{bmatrix}p_{\B}^1&\cdots&p_{\B}^s \end{bmatrix}=u_\B^\top\Pe^n_s &&\text{(The dot product vector)}, \\
      (1.4)&\quad\p =\max_{1\leq i \leq s }p_\B^i &&\text{(The maximum value in $p_{\B}$)}.
    \end{aligned}
  $
\par}

        \textbf{2. Return  \;}
    	
    	 {
  $ \displaystyle
    \begin{aligned}
        (2.1)&\quad \cm(\Pe^n_s)=\min_{\B \, \subset \, \Pe^n_s} \p &&\text{(The cosine measure of $\Pe^n_s$)}\\
        (2.2)&\quad\V(\Pe^n_s)=\{u_{\B}:\p=\cm(\Pe^n_s)\}&&\text{(The cosine vector set of $\Pe^n_s$)}.
    \end{aligned}
  $
\par}
\end{algorithm}}
\end{center}

The algorithm investigates all the bases contained in $\Pe^n_s.$ Note that any finite positive spanning set of $\R^n$ contains at least one basis of $\R^n$ (by Proposition \ref{Prop:Removing}). An obvious upper bound for the maximal number of bases contained in a finite positive spanning set $\Pe^n_s$ is ${s \choose n}$. The precision of these bounds may be improved, but it is beyond the scope of this paper. However, the supremum of the number of bases contained  in minimal positive bases ($s=n+1$) and maximal positive bases ($s=2n$) are easily derived.
\begin{proposition} Let $\D^n_{n+1}$ be a minimal positive basis of $\R^n$. Then $\D^n_{n+1}$ contains $n+1$ bases of $\R^n.$
\end{proposition}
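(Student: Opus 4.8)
The plan is to count the size-$n$ subsets of $\D^n_{n+1}$ and show that every one of them is a basis of $\R^n$. Since a basis of $\R^n$ consists of exactly $n$ vectors, any basis contained in $\D^n_{n+1}$ must arise as a subset of size $n$, and every such subset is obtained by deleting exactly one of the $n+1$ vectors. Hence the number of candidate bases is $\binom{n+1}{n}=n+1$, which gives the upper bound immediately.

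First I would invoke Proposition \ref{Prop:Removing}. Because $\D^n_{n+1}$ positively spans $\R^n$, deleting any single vector $d_i$ leaves a set $\D^n_{n+1}\setminus\{d_i\}$ of $n$ vectors that linearly spans $\R^n$. A spanning set of $\R^n$ containing exactly $n$ vectors must be linearly independent, so each $\D^n_{n+1}\setminus\{d_i\}$ is in fact a basis of $\R^n$. This upgrades every candidate subset to an actual basis.

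Next I would note that distinct choices of the deleted vector produce distinct subsets, since the members of a set are pairwise distinct; thus the $n+1$ deletions yield $n+1$ distinct bases. Combining this with the upper bound from the counting argument shows that $\D^n_{n+1}$ contains exactly $n+1$ bases of $\R^n$.

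There is no serious obstacle in this argument. The only points requiring care are the identification of ``bases contained in $\D^n_{n+1}$'' with linearly independent subsets of size $n$, and the elementary fact that $n$ vectors spanning $\R^n$ are automatically independent. With these in hand, Proposition \ref{Prop:Removing} does all the substantive work.
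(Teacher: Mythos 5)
Your proof is correct and follows essentially the same route as the paper's: invoke Proposition \ref{Prop:Removing} to see that each of the $\binom{n+1}{n}=n+1$ subsets of size $n$ spans $\R^n$, hence is a basis. You merely make explicit the small step the paper leaves implicit, namely that $n$ vectors spanning $\R^n$ are automatically linearly independent.
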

\begin{proof}From Proposition \ref{Prop:Removing}, any set of $n$ vectors is a  basis of $\R^n$. Therefore, $\D^n_{n+1}$ contains ${n+1 \choose n}=n+1$ bases. \qed
\end{proof}
\begin{proposition}\label{prop:iterationsformaximal} Let $\D^n_{2n}$ be a maximal positive basis. Then $\D^n_{2n}$ contains $2^n$ bases.
\end{proposition}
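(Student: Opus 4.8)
The plan is to exploit the structural characterization of maximal positive bases supplied by Theorem \ref{thm:maxbasis}. First I would write $\D^n_{2n}$ in its canonical form $\begin{bmatrix}d_1&\cdots&d_n&-\delta_1 d_1&\cdots&-\delta_n d_n\end{bmatrix}$, where $\{d_1,\dots,d_n\}$ is a basis of $\R^n$ and $\delta_1,\dots,\delta_n>0$. The crucial observation is that the $2n$ columns organize themselves into $n$ \emph{pairs} $\{d_i,-\delta_i d_i\}$, where the two vectors within a pair are nonzero scalar multiples of each other (namely, $-\delta_i d_i = (-\delta_i)\,d_i$ with $-\delta_i \neq 0$).

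Next I would establish that any basis $\B\subset\D^n_{2n}$ must select exactly one vector from each pair. Since a basis of $\R^n$ consists of $n$ linearly independent vectors, it cannot contain both $d_i$ and $-\delta_i d_i$ from a single pair, as these are linearly dependent. Hence a basis contains \emph{at most} one vector from each of the $n$ pairs. Because a basis contains exactly $n$ vectors and there are exactly $n$ pairs, the pigeonhole principle forces it to contain \emph{exactly} one vector from each pair.

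Conversely, I would verify that every such selection actually is a basis. Choosing $e_i\in\{d_i,-\delta_i d_i\}$ for each $i$ yields $e_i = c_i d_i$ with $c_i\in\{1,-\delta_i\}$, so $c_i\neq 0$. Scaling each vector of the basis $\{d_1,\dots,d_n\}$ by a nonzero constant preserves linear independence, so $\{e_1,\dots,e_n\}$ is again a basis of $\R^n$. Consequently, the bases of $\R^n$ contained in $\D^n_{2n}$ are in bijection with the choices of one vector from each of the $n$ pairs, and there are exactly $2^n$ such choices.

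The argument reduces to a counting argument once the pairing structure is exposed, so no step presents a genuine obstacle. The only point that requires care is confirming both directions of the pairing correspondence: that linear independence forces at most one selection per pair, and that choosing exactly one per pair is simultaneously necessary (to reach $n$ vectors) and sufficient (to guarantee independence via nonzero scaling of a known basis).
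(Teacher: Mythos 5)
Your proposal is correct and follows essentially the same route as the paper: both invoke Theorem \ref{thm:maxbasis} to write $\D^n_{2n}$ in the canonical paired form and then count the $2^n$ ways of selecting one vector per pair. Your write-up merely spells out the two directions (linear dependence forbids taking both vectors of a pair; nonzero rescaling of a basis preserves independence) that the paper's proof states as a single ``note that any basis has the form $\begin{bmatrix}\pm d_1&\cdots&\pm d_n\end{bmatrix}$.''
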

\begin{proof}
Without loss of generality, by Theorem \ref{thm:maxbasis}, let $\D^n_{2n}=\begin{bmatrix}d_1&d_2&\cdots&d_n&-d_1&-d_2&-d_n\end{bmatrix},$ where $d_i$ is a unit vector for all $i \in \{1, 2, \dots, n\}$. Note that any basis contained in $\D^n_{2n}$ has the form $\B=\begin{bmatrix}\pm d_1&\pm d_2&\cdots&\pm d_n \end{bmatrix}$. Therefore, $\D^n_{2n}$ contains $2^n$  bases. \qed
\end{proof}

Note that  $2^n$ is  smaller than ${2n \choose n}$ whenever $n \in \{2, 3, \dots\}.$
Finding the supremum for the number of bases contained in a positive basis of intermediate size ($n+1<s<2n$) is more challenging and is left for future exploration. Nevertheless, the number of bases  in any positive  basis $\D^n_s$ (or any finite positive spanning set $\Pe^n_s$) is a finite number greater than one and hence, the algorithm always find an exact solution in finite time.

To prove that the Algorithm \ref{alg:cmpbasis} returns the  exact cosine measure of a positive spanning set for any size $s \in \{2, 3, \dots\}$ requires the following lemma.

\begin{lemma}\label{lem:epsover2}
Let $\epsilon \neq 0$ and let $u$ and $v$ be unit vectors in $\R^n$. Then
    \begin{itemize}
        \item[i.] $\Vert u+\epsilon v \Vert=1$ if and only if $\epsilon=-2u^\top v$, and
        \item[ii.] $\Vert u+\epsilon v \Vert<1$ implies $\Vert u-\epsilon v \Vert>1$.
    \end{itemize}
\end{lemma}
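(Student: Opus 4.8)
The plan is to reduce everything to squared norms, since for a vector $w$ we have $\Vert w \Vert = 1 \iff \Vert w \Vert^2 = 1$ and $\Vert w \Vert < 1 \iff \Vert w \Vert^2 < 1$, and squared norms expand cleanly via the bilinearity of the dot product. The single computation driving both parts is
\begin{equation*}
\Vert u + \epsilon v \Vert^2 = (u + \epsilon v)^\top (u + \epsilon v) = \Vert u \Vert^2 + 2\epsilon\, u^\top v + \epsilon^2 \Vert v \Vert^2 = 1 + 2\epsilon\, u^\top v + \epsilon^2,
\end{equation*}
where the last equality uses that $u$ and $v$ are unit vectors.

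For part (i), I would impose $\Vert u + \epsilon v \Vert^2 = 1$ on the expansion above, which leaves $2\epsilon\, u^\top v + \epsilon^2 = 0$, i.e. $\epsilon(2 u^\top v + \epsilon) = 0$. Since $\epsilon \neq 0$ by hypothesis, the factor $\epsilon$ may be cancelled, giving exactly $\epsilon = -2 u^\top v$. The converse is the same computation read backwards: substituting $\epsilon = -2 u^\top v$ makes $2\epsilon\, u^\top v + \epsilon^2 = \epsilon(2u^\top v + \epsilon) = 0$, so the squared norm returns to $1$.

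For part (ii), the cleanest route is to write down the companion expansion $\Vert u - \epsilon v \Vert^2 = 1 - 2\epsilon\, u^\top v + \epsilon^2$ and add it to the one above, yielding the parallelogram-type identity
\begin{equation*}
\Vert u + \epsilon v \Vert^2 + \Vert u - \epsilon v \Vert^2 = 2 + 2\epsilon^2 = 2(1 + \epsilon^2).
\end{equation*}
Assuming $\Vert u + \epsilon v \Vert < 1$, hence $\Vert u + \epsilon v \Vert^2 < 1$, this identity forces
\begin{equation*}
\Vert u - \epsilon v \Vert^2 = 2(1 + \epsilon^2) - \Vert u + \epsilon v \Vert^2 > 2(1 + \epsilon^2) - 1 = 1 + 2\epsilon^2,
\end{equation*}
and since $\epsilon \neq 0$ gives $\epsilon^2 > 0$, the right-hand side strictly exceeds $1$, so $\Vert u - \epsilon v \Vert > 1$. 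There is no genuine obstacle here — the lemma is purely computational — but the one point requiring care is that the hypothesis $\epsilon \neq 0$ is used in an essential way in both parts: in (i) to cancel the common factor $\epsilon$, and in (ii) to guarantee the \emph{strict} inequality $1 + 2\epsilon^2 > 1$.
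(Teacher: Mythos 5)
Your proof is correct and takes essentially the same route as the paper: both arguments expand $\Vert u \pm \epsilon v \Vert^2 = 1 \pm 2\epsilon\, u^\top v + \epsilon^2$ and read off part (i) by cancelling $\epsilon$, with your part (ii) merely repackaging the paper's direct inequality manipulation ($2\epsilon u^\top v + \epsilon^2 < 0 \Rightarrow 2\epsilon u^\top v - \epsilon^2 < -2\epsilon^2 < 0$) as a parallelogram-type identity. If anything, your write-up is the more careful one, since the paper's displayed expansions drop the squares on the norms and carry a sign slip in the expression for $\Vert u - \epsilon v \Vert^2$ (it should be $1 - 2\epsilon u^\top v + \epsilon^2$), typos your version avoids.
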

\begin{proof}
Since
   $$
    \Vert u +\epsilon v \Vert=1 + (2\epsilon u^\top v + \epsilon^2) ~\mbox{and}~\Vert u -\epsilon v \Vert=1 + (2\epsilon u^\top v - \epsilon^2),
   $$
it follows that $\Vert u +\epsilon v \Vert=1$ if and only if $2\epsilon u^\top v + \epsilon^2=0$.  Since $\epsilon \neq 0$, the first result follows.

Considering $\Vert u+\epsilon v \Vert<1$, notice that
$$\Vert u+\epsilon v \Vert<1 \iff 1 + (2\epsilon u^\top v + \epsilon^2)<1
\iff 2\epsilon u^\top v + \epsilon^2 < 0
\iff 2\epsilon u^\top v - \epsilon^2 < -2\epsilon^2 < 0,$$
which implies the second result.
\qed
\end{proof}

The previous lemma is used in the following proposition.
\begin{proposition} \label{prop:spanrn}
Let $\Pe^n_s$ be a positive spanning set of $\R^n$ and let $\us \in \V(\Pe^n_s)$.
Then $$\spann(\A(\us, \Pe^n_s))=\R^n.$$
\end{proposition}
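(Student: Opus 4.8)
The plan is to argue by contradiction using a perturbation of $\us$. First I would normalize so that, without loss of generality, every $d \in \Pe^n_s$ is a unit vector; write $c = \cm(\Pe^n_s)$ and recall $c > 0$ by Proposition \ref{Prop:CosMeasure}. Since $\us \in \V(\Pe^n_s)$ we have $\max_{d \in \Pe^n_s} \us^\top d = c$, the active set $\A(\us, \Pe^n_s)$ consists exactly of those $d$ with $\us^\top d = c$, and every remaining $d$ satisfies $\us^\top d < c$ strictly. Suppose, toward a contradiction, that $\spann(\A(\us, \Pe^n_s)) \neq \R^n$. Then there is a unit vector $v$ orthogonal to $\spann(\A(\us, \Pe^n_s))$, i.e.\ $v^\top d = 0$ for every $d \in \A(\us, \Pe^n_s)$; after possibly replacing $v$ by $-v$, I may assume $\us^\top v \geq 0$.

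The key idea is that tilting $\us$ toward $v$ and renormalizing strictly lowers the maximal dot product, violating the minimality that defines $\cm$. For $\epsilon > 0$ set $u_\epsilon = \us + \epsilon v$. Expanding gives $\Vert u_\epsilon \Vert^2 = 1 + 2\epsilon\,\us^\top v + \epsilon^2 \geq 1 + \epsilon^2 > 1$, so $\Vert u_\epsilon \Vert > 1$; Lemma \ref{lem:epsover2}(i) is what guarantees this strict inequality cannot collapse to an equality, since $\Vert u_\epsilon \Vert = 1$ would force $\epsilon = -2\us^\top v \leq 0$. Let $\hat u_\epsilon = u_\epsilon / \Vert u_\epsilon \Vert$, a unit vector.

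Next I would estimate the two families of directions separately. For an active $d$, the orthogonality $v^\top d = 0$ gives $u_\epsilon^\top d = \us^\top d = c$, so $\hat u_\epsilon^\top d = c / \Vert u_\epsilon \Vert < c$. For a non-active $d$, $u_\epsilon^\top d = \us^\top d + \epsilon\, v^\top d \to \us^\top d < c$ as $\epsilon \to 0^+$; because $\Pe^n_s$ is finite, there is an $\epsilon_0 > 0$ such that $\hat u_\epsilon^\top d < c$ for every non-active $d$ whenever $0 < \epsilon < \epsilon_0$. Fixing any such $\epsilon$ then yields a unit vector $\hat u_\epsilon$ with $\max_{d \in \Pe^n_s} \hat u_\epsilon^\top d < c = \cm(\Pe^n_s)$, contradicting the definition of the cosine measure as the minimum over unit vectors. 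Hence $\spann(\A(\us, \Pe^n_s)) = \R^n$.

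I expect the delicate step to be the treatment of the non-active directions: shrinking the active dot products by renormalization is automatic once $\Vert u_\epsilon \Vert > 1$, but one must certify that the perturbation does not raise any previously non-active dot product up to $c$. The strict inequality $\us^\top d < c$ at $\epsilon = 0$ together with the finiteness of $\Pe^n_s$ is exactly what supplies the uniform threshold $\epsilon_0$, so I would be careful to lean on both. A minor point to record explicitly is the existence of $v$, which is precisely the content of the negated conclusion $\spann(\A(\us, \Pe^n_s)) \neq \R^n$.
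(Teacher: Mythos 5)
Your proof is correct and follows essentially the same route as the paper's: perturb $\us$ by a unit vector $v$ orthogonal to the span of the active set, renormalize, and contradict the minimality defining $\cm(\Pe^n_s)$, using finiteness to control the non-active directions. The one difference is local but a nice touch: by orienting $v$ so that $\us^\top v \geq 0$ you get $\Vert \us + \epsilon v \Vert > 1$ outright, whereas the paper invokes Lemma \ref{lem:epsover2}(ii) to select whichever of $\us \pm \epsilon v$ has norm exceeding $1$, and its stated constraint $0 < \epsilon < \vert -2\us^\top v\vert$ is actually vacuous in the degenerate case $\us^\top v = 0$ --- a case your orientation argument handles automatically.
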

\begin{proof}
Without loss of generality, assume that all vectors $d$ in $\Pe^n_s$ are unit vectors.
Suppose that $\spann(\A(\us, \Pe^n_s))\neq\R^n$, i.e., the rank of $\A(\us, \Pe^n_s)$ is strictly less than $n.$ This implies that the kernel of $\A(\us, \Pe^n_s)$ is nonempty. Let $v$ be a unit vector in the kernel of $\A(\us, \Pe^n_s)$. This means that $d^\top v=0$ for all $d$ in $\A(\us,\Pe^n_s).$

Notice that, if $d \in \Pe^n_s\setminus \A(\us,\Pe^n_s),$ then $$d^\top \us <\cm(\Pe^n_s).$$
 Consider the vector $\us+\epsilon v.$ Then there exists an $\epsilon$ such that $0<\epsilon<\vert -2\us^\top v\vert$ and  $$\frac{d^\top (\us\pm \epsilon v)}{\Vert \us \pm \epsilon v \Vert}<\cm(\Pe^n_s)$$ for all $d \in \Pe^n_s\setminus \A(\us, \Pe^n_s).$  Moreover, since $d^\top v=0,$ it follows that $$\frac{d^\top(\us\pm \epsilon v)}{\Vert\us \pm \epsilon v \Vert}=\frac{d^\top \us}{\Vert\us \pm \epsilon v \Vert} \pm 0=\frac{\cm(\Pe^n_s)}{\Vert \us \pm \epsilon v \Vert}$$  for all $d \in \A(\us, \Pe^n_s).$
By Lemma \ref{lem:epsover2}(i), $\epsilon \neq -2\us^\top v$ implies that $\Vert \us + \epsilon v \Vert \neq 1.$ By Lemma \ref{lem:epsover2}(ii), if $\Vert \us +\epsilon v \Vert<1$, then $\Vert \us -\epsilon v \Vert>1.$ Select $w$  in $\{\us +\epsilon v, \us -\epsilon v \}$ such that $\Vert w \Vert >1.$ Then $$\frac{d^\top w}{\Vert w \Vert}< \cm(\Pe^n_s)$$ for all $d \in \Pe^n_s.$ This contradicts the definition of cosine measure.

Therefore, $\spann(\Pe^n_s)\subseteq \spann(\A(\us, \Pe^n_s))$.  Since $\spann(\Pe^n_s)=\R^n$, the result follows. \qed
\end{proof}

Note that the positive spanning set property of $\Pe^n_s$ in the previous proposition is sufficient to prove the result. The positive independence property of positive bases is not necessary to obtain the result.  This provide sufficient background to complete the proof that Algorithm \ref{alg:cmpbasis} returns the exact cosine measure of any  finite positive spanning set of $\R^n.$


\begin{corollary}\label{cor:containbasis}

Let $\Pe^n_s$ be a finite positive spanning set of $\R^n$ and let $\us \in \V(\Pe^n_s).$ Then
$\A(\us, \Pe^n_s)$ contains a basis of $\R^n.$
\end{corollary}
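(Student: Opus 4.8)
The plan is to obtain this directly from Proposition~\ref{prop:spanrn}, which already does the substantive work. That proposition establishes that $\spann(\A(\us, \Pe^n_s)) = \R^n$, so the active set is a spanning set of $\R^n$ in the ordinary linear-algebra sense. The only remaining task is to pass from ``spans $\R^n$'' to ``contains a basis of $\R^n$,'' which is a standard fact about finite-dimensional vector spaces.

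First I would record that $\A(\us, \Pe^n_s)$ is finite, since it is a subset of the finite set $\Pe^n_s$ by definition. Next I would invoke Proposition~\ref{prop:spanrn} to conclude that $\spann(\A(\us, \Pe^n_s)) = \R^n$. The key step is then the classical result that every finite spanning set of an $n$-dimensional vector space contains a linearly independent subset of exactly $n$ vectors, i.e.\ a basis: one may extract such a subset greedily, discarding any vector that lies in the span of the vectors retained so far, and the resulting maximal linearly independent subset still spans $\R^n$ and therefore has exactly $n$ elements. Applying this to $\A(\us, \Pe^n_s)$ yields a basis of $\R^n$ contained in the active set.

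I do not expect any genuine obstacle here, as the corollary is essentially a restatement of Proposition~\ref{prop:spanrn} combined with a textbook fact. The only point deserving a moment of care is to confirm that the vectors are being treated appropriately: the active set is written using normalized entries $d^\top/\Vert d\Vert$, but normalization does not affect linear spanning or independence, so the extracted basis of unit vectors corresponds to a basis of $\R^n$ exactly as required by the algorithm, which iterates over bases $\B \subset \Pe^n_s$.
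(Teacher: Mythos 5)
Your proposal is correct and matches the paper's approach exactly: the paper also deduces the corollary from Proposition~\ref{prop:spanrn} together with the classical fact that a finite spanning set of $\R^n$ contains a basis (which it simply cites, to \cite{Brown1988}, rather than proving via greedy extraction as you do). Your additional remark about normalization not affecting spanning is a fine bit of care but not a substantive difference.
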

 This is a classical result in linear algebra. See \cite[Theorem 2.11]{Brown1988} for example.

\begin{theorem} \label{thm:mainresult}
Let $\Pe^n_s$ be a finite positive spanning set of $\R^n.$  Then Algorithm \ref{alg:cmpbasis} returns the exact value of the  cosine measure $\cm(\Pe^n_s).$

\end{theorem}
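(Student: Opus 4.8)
The plan is to show two inequalities, establishing that the quantity $\min_{\B\subset\Pe^n_s}\p$ computed by the algorithm equals $\cm(\Pe^n_s)$. The central observation is that, for any basis $\B\subset\Pe^n_s$, the unit vector $u_\B$ from step (1.2) satisfies $u_\B^\top d/\Vert d\Vert=\gamma_\B$ for every $d\in\B$ (after normalizing, by Lemma \ref{Lem:gamma} and Lemma \ref{lem:gammab}), so $\p=\max_{1\le i\le s}p_\B^i$ is precisely the value $\max_{d\in\Pe^n_s}u_\B^\top d/\Vert d\Vert$ achieved by the fixed unit vector $u_\B$. By the definition of cosine measure as a minimum over \emph{all} unit vectors, every such $\p$ is an upper bound for $\cm(\Pe^n_s)$, and hence so is their minimum. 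This gives $\cm(\Pe^n_s)\le\min_{\B\subset\Pe^n_s}\p$ essentially for free.

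For the reverse inequality, I would take a minimizer $\us\in\V(\Pe^n_s)$, which exists because the objective is continuous and the unit sphere is compact. By Corollary \ref{cor:containbasis}, the active set $\A(\us,\Pe^n_s)$ contains a basis $\B^\ast$ of $\R^n$. The defining property of the active set is that $d^\top\us/\Vert d\Vert=\cm(\Pe^n_s)$ for every $d\in\A(\us,\Pe^n_s)$, so in particular $\us$ realizes $n$ equal positive dot products against the normalized vectors of $\B^\ast$. By Lemma \ref{lem:gammab}, this common value must equal $\gamma_{\B^\ast}$, the quantity computed in step (1.1), and by uniqueness of the vector achieving these equal dot products (noted after Lemma \ref{Lem:gamma}) we get $\us=u_{\B^\ast}$. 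Consequently $\cm(\Pe^n_s)=\us^\top d/\Vert d\Vert\le\max_{d\in\Pe^n_s}u_{\B^\ast}^\top d/\Vert d\Vert=\mathring{p}_{\B^\ast}$, and since $\cm(\Pe^n_s)$ is itself the maximum of $\us^\top d/\Vert d\Vert$ over $d\in\Pe^n_s$, this maximum is attained on the active set, forcing $\mathring{p}_{\B^\ast}=\cm(\Pe^n_s)$. Thus $\min_{\B\subset\Pe^n_s}\p\le\mathring{p}_{\B^\ast}=\cm(\Pe^n_s)$, completing the argument that the two sides are equal.

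The main obstacle, and the step requiring the most care, is the identification $\us=u_{\B^\ast}$ and $\gamma_{\B^\ast}=\cm(\Pe^n_s)$. One must argue that the common dot-product value on the active-set basis is genuinely positive (so that Lemma \ref{lem:gammab} applies), which follows from $\cm(\Pe^n_s)>0$ by Proposition \ref{Prop:CosMeasure}, and one must invoke the uniqueness of the unit vector having $n$ prescribed equal dot products against a linearly independent set. I would also remark that the ``without loss of generality'' normalization to unit vectors $d$ is what makes $p_\B^i=u_\B^\top d_i/\Vert d_i\Vert$ match the cosine-measure objective exactly; the general (non-unit) case is handled by the $\Vert d\Vert$ denominators appearing in both the definition and the algorithm. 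Finally, once equality of the scalar is established, the claim about $\V(\Pe^n_s)$ in step (2.2) follows because any $u_\B$ with $\p=\cm(\Pe^n_s)$ is a unit vector whose maximum normalized dot product equals the cosine measure, hence a minimizer.
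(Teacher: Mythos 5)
Your proof is correct and follows essentially the same route as the paper's: both extract a basis $\B^\ast$ from the active set of a minimizer $\us$ via Corollary \ref{cor:containbasis}, then use Lemma \ref{lem:gammab} (together with the uniqueness remark after Lemma \ref{Lem:gamma}) to conclude $\gamma_{\B^\ast}=\cm(\Pe^n_s)=\mathring{p}_{\B^\ast}$, with the reverse inequality coming from the definition of the cosine measure as a minimum over all unit vectors. Your write-up is simply more explicit than the paper's, which compresses the inequality $\p\geq\cm(\Pe^n_s)$ for every basis $\B$ into the phrase ``by definition of the cosine measure'' and leaves the identification $\us=u_{\B^\ast}$ implicit.
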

\begin{proof}
Without loss of generality, let $\Pe^n_s=\begin{bmatrix}d_1&d_2&\cdots&d_s\end{bmatrix}$ be a  finite positive spanning set of unit vectors in $\R^n$ and let $\us \in \V(\Pe^n_s).$ By Corollary \ref{cor:containbasis}, $\A(\us, \Pe^n_s)$ contains a basis of $\R^n.$ Without loss of generality, let this basis be $\B_*=\begin{bmatrix}d_1&d_2&\cdots&d_n\end{bmatrix}.$ So that $$\cm(\Pe^n_s)=d_1^\top \us=\dots=d_n^\top \us>0,$$ where $\us$ is a unit vector.  By Lemma \ref{lem:gammab}, it follows that $$\cm(\Pe^n_s)=\gamma_{\B_*}=\frac{1}{\sqrt{\one^\top \G(\B_*)^{-1} \one}}.$$
Note that $\gamma_{\B_*}=\max_{1\leq i \leq s}d_i^\top \us=\mathring{p}_{\B_*}$ since $\gamma_{\B_*}=\cm(\Pe^n_s).$ Therefore, by definition of the cosine measure, $$\min_{\B \, \subset \, \Pe^n_s}\p=\cm(\Pe^n_s),$$  where $\B$ is a basis of $\R^n$ contained in $\Pe^n_s.$ \qed
\end{proof}

\section{Complexity}\label{sec:complexity}

The complexity of an algorithm is a count of the number of floating point operations (flops) required to complete the algorithm.   As noted above, the maximum number of iterations required by the algorithm for a finite positive spanning set $\Pe^n_s$ is ${s \choose n}$; unless a maximal positive basis is imputed, in which case the required number of iterations is $2^n$ (Proposition \ref{prop:iterationsformaximal}).  The complexity in big-oh notation per iteration is next.

\begin{proposition} Let $\Pe^n_s$ be a finite positive spanning set of $\R^n$. Then Algorithm \ref{alg:cmpbasis} has a complexity of  $O(s^2 n)+ O(s^3) + O(n^3)$ flops per iteration (assuming basic matrix inversion techniques).
\end{proposition}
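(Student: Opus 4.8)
The plan is to count the flops contributed by each of the four computations performed in a single iteration of Step 1, since Step 2 only involves taking minima and collecting vectors across iterations rather than arithmetic within a fixed basis $\B$. I would organize the count by identifying which computation produces each of the three terms $O(s^2 n)$, $O(s^3)$, and $O(n^3)$, and then argue that summing the per-step costs yields exactly this total.

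First I would address the matrix inversion costs. Step $(1.1)$ requires forming the Gram matrix $\G(\B)$, which is $n\times n$, and then inverting it. Under the stated assumption of basic matrix inversion techniques (e.g., Gaussian elimination), inverting an $n\times n$ matrix costs $O(n^3)$ flops; forming $\G(\B)$ from the $n$ unit vectors costs $O(n^3)$ as well (an $n\times n$ matrix of dot products, each an $O(n)$ operation). The scalar $\gamma_\B$ is then obtained from $\one^\top \G(\B)^{-1}\one$, a quadratic form costing $O(n^2)$, plus a square root. Step $(1.2)$ computes $u_\B=\B^{-\top}\gamma_\B\one$, which requires inverting the $n\times n$ basis matrix $\B$ at cost $O(n^3)$ and then a matrix-vector product at cost $O(n^2)$. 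Together these give the $O(n^3)$ term.

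Next I would account for the $O(s^2 n)$ and $O(s^3)$ terms, which I expect to arise from Step $(1.3)$, the dot product vector $p_\B=u_\B^\top \Pe^n_s$. Here $\Pe^n_s$ is viewed as an $n\times s$ matrix, so the product $u_\B^\top \Pe^n_s$ of a $1\times n$ row with an $n\times s$ matrix costs $O(sn)$ flops — this alone does not reach $O(s^2 n)$ or $O(s^3)$, so the dominant $s$-dependent terms must be traced to the cost of determining the bases $\B\subset\Pe^n_s$ themselves or to checking independence and assembling the relevant submatrices. The maximum value $\p$ in Step $(1.4)$ is merely a scan over $s$ entries, costing $O(s)$. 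The main obstacle will be pinning down precisely where the $O(s^2 n)$ and $O(s^3)$ costs enter a single iteration as defined by the algorithm, since the naive reading of Steps $(1.1)$–$(1.4)$ gives only $O(sn)+O(n^3)$; I would reconcile this by attributing the $s$-heavy terms to the per-iteration overhead of identifying and extracting a valid basis $\B$ from the $n\times s$ matrix $\Pe^n_s$ (for instance, via row reduction on candidate columns, which can cost $O(s^3)$, with the associated dot product bookkeeping over all $s$ columns contributing $O(s^2 n)$). Finally I would sum the three identified dominant contributions $O(s^2 n)$, $O(s^3)$, and $O(n^3)$ and observe that all remaining terms are absorbed into these, completing the bound.
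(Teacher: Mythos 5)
Your step-by-step counting of (1.1)--(1.4) is dimensionally careful and essentially correct: with $\B$ an $n\times n$ basis matrix, forming $\G(\B)$ costs $O(n^3)$, inverting $\G(\B)$ and $\B$ costs $O(n^3)$ each, step (1.3) costs $O(sn)$, and step (1.4) costs $O(s)$. At that point you were done: $O(sn)+O(n^3)$ is contained in $O(s^2n)+O(s^3)+O(n^3)$, and a big-O statement is only an upper bound, so the proposition follows immediately from your tighter count. The genuine gap is in your final paragraph, where instead of closing the argument this way you treat the mismatch as evidence that hidden $s$-heavy work ``must'' exist inside each iteration, and you invent it: you attribute $O(s^3)$ to row reduction for identifying a basis among the columns of $\Pe^n_s$, and $O(s^2 n)$ to ``dot product bookkeeping.'' Neither attribution is justified. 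Gaussian elimination on the $n\times s$ matrix $\Pe^n_s$ costs $O(n^2 s)$, not $O(s^3)$; nothing in steps (1.1)--(1.4) performs $s^2$ dot products; and testing whether a candidate subset of $n$ columns is a basis is already covered by the $O(n^3)$ inversion of $\B$, so no separate per-iteration extraction cost of the kind you describe exists.

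For comparison, the paper resolves the same mismatch in the opposite (and equally loose) direction: its proof charges the construction of the Gram matrix at $O(s^2 n)$ and its inversion at $O(s^3)$ --- that is, it sizes $\G(\B)$ as $s\times s$ rather than $n\times n$ --- and adds $O(n^3)$ for inverting $\B$, which produces the three stated terms directly. Since $s>n$, that accounting is a safe over-estimate, but your $n\times n$ reading of $\G(\B)$ is the accurate one for the algorithm as written. The lesson is that you should not manufacture costs to match a stated bound term by term; observing that your tighter bound implies the stated bound is both sufficient and rigorous, and is the cleanest way to finish the proof.
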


\begin{proof}
Computing the Gram matrix $\G(\B)$  requires $O(s^2 n)$ flops.  Using basic methods, the matrix inversion of the Gram matrix uses $O(s^3)$ flops.  The matrix multiplication in step (1.1) is $O(2n^2)$ and the square roots and division are negligible.    The matrix $\B$ is $n \times n$, so inversion is $O(n^3)$.  Matrix multiplication in step (1.3) is $O(n^2)$ and the maximum in step (1.4) is negligible.  All of the operations in step 2. are negligible.  So, the major effort is the construction of the Gram matrices and the matrix inversions, resulting in $O(s^2 n)+ O(s^3) + O(n^3)$ flops per iteration.
\end{proof}

Note, the complexity above could be improved slightly if more advanced matrix inversion methods are used \cite{Golub96}.  However, the complexity of constructing the Gram matrix will remain $O(s^2 n)$, so little is gained by doing this.

Algorithm \ref{alg:cmpbasis} can be shortened for minimal positive bases ($s=n+1$) and maximal positive bases ($s=2n$).

\begin{theorem}Let $\D^n_{n+1}=\begin{bmatrix}d_1&d_2&\cdots&d_{n+1}\end{bmatrix}$ be a  minimal positive basis of $\R^n.$ Then $$\gamma_{\B}=\p$$ for all  bases $\B \subset \D^n_{n+1}$ where $\gamma_\B$ and $\p$ are defined as in Algorithm \ref{alg:cmpbasis}. Moreover, $$\cm(\D^n_{n+1})=\min_{\B \, \subset \, \D^n_{n+1}} \gamma_\B.$$
\end{theorem}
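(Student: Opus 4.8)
The plan is to exploit the special dependence structure of a minimal positive basis, which reduces the whole statement to a one-line inner-product computation. First I would establish that there exist scalars $c_1, \dots, c_{n+1} > 0$ with $\sum_{i=1}^{n+1} c_i d_i = 0$. Since $\D^n_{n+1}$ consists of $n+1$ vectors whose positive span (hence linear span) is $\R^n$, the corresponding matrix has rank $n$ and a one-dimensional null space, so the linear dependence among the $d_i$ is unique up to a scalar. By Proposition \ref{Prop:Removing}, deleting any single vector leaves a basis, so every coefficient in this dependence is nonzero: a vanishing coefficient would force the remaining $n$ vectors to be linearly dependent. To fix the signs, I would use that $\D^n_{n+1}$ positively spans $\R^n$, so $-d_1 \in \pspan(\D^n_{n+1})$ can be written as $-d_1 = \sum_i \lambda_i d_i$ with $\lambda_i \ge 0$; this yields a dependence $(1+\lambda_1)d_1 + \sum_{i \ge 2} \lambda_i d_i = 0$ with nonnegative coefficients and a strictly positive leading coefficient. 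As it must be a scalar multiple of the unique dependence, and that dependence has every coefficient nonzero, each $\lambda_i$ is in fact strictly positive. Hence all coefficients share one sign, which I normalise to $c_i > 0$.

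Next I would fix a basis $\B \subset \D^n_{n+1}$, say the one obtained by removing $d_j$. By construction $u_\B^\top d_i = \gamma_\B$ for every $i \ne j$. Taking the inner product of the dependence relation with $u_\B$ gives
\[
\sum_{i \ne j} c_i \gamma_\B + c_j\, u_\B^\top d_j = 0,
\]
and solving for the single remaining dot product yields
\[
u_\B^\top d_j = -\frac{\gamma_\B \sum_{i \ne j} c_i}{c_j}.
\]
Because $\gamma_\B > 0$ (Lemma \ref{Lem:gamma}) and every $c_i > 0$, the right-hand side is strictly negative, so $u_\B^\top d_j < 0 < \gamma_\B$. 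Thus the vector $p_\B = u_\B^\top \D^n_{n+1}$ has its $n$ entries indexed by $\B$ equal to $\gamma_\B$ and its one remaining entry strictly smaller, whence $\p = \max_{1\le i\le s} p_\B^i = \gamma_\B$. This proves the first claim for every basis $\B \subset \D^n_{n+1}$.

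The ``moreover'' statement is then immediate: Theorem \ref{thm:mainresult} already establishes $\cm(\D^n_{n+1}) = \min_{\B \, \subset \, \D^n_{n+1}} \p$, and substituting $\p = \gamma_\B$ gives $\cm(\D^n_{n+1}) = \min_{\B \, \subset \, \D^n_{n+1}} \gamma_\B$.

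The main obstacle is the first paragraph, namely pinning down that the (essentially unique) linear dependence of a minimal positive basis can be taken with \emph{strictly positive} coefficients; this is where the positive-spanning hypothesis and the uniqueness of the null direction both get used, and where I would want to double-check the sign bookkeeping carefully, since the entire conclusion rests on the resulting inequality $u_\B^\top d_j < \gamma_\B$. Once that positive dependence is in hand, the remaining computation is routine.
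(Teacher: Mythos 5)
Your proof is correct, but it takes a genuinely different route from the paper. The paper's argument is a one-liner: since $u_\B^\top d_i=\gamma_\B>0$ for every $d_i\in\B$, and Theorem \ref{Theorem:DotProd}(i) (the positive-spanning characterization) guarantees that \emph{some} vector of $\D^n_{n+1}$ has negative dot product with the nonzero vector $u_\B$, that vector can only be the single leftover vector $d$; hence $d^\top u_\B<0<\gamma_\B$ and $\p=\gamma_\B$. You instead first establish the classical structure theorem for minimal positive bases --- that the essentially unique linear dependence $\sum_{i=1}^{n+1}c_i d_i=0$ can be taken with all $c_i>0$ (using the one-dimensional kernel, Proposition \ref{Prop:Removing} to rule out vanishing coefficients, and positive spanning to fix the signs) --- and then read off $u_\B^\top d_j=-\gamma_\B\bigl(\sum_{i\neq j}c_i\bigr)/c_j<0$ by pairing the dependence with $u_\B$. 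Your sign bookkeeping is sound, and the reduction of the ``moreover'' claim to Theorem \ref{thm:mainresult} is exactly what the paper does implicitly. What the paper's route buys is brevity: it reuses an already-cited result and avoids any structural analysis. What your route buys is an explicit, quantitative formula for the one negative dot product (rather than mere negativity), plus a self-contained derivation of the positive-dependence structure of minimal positive bases, a fact of independent interest; the cost is that most of your proof is spent establishing machinery the paper gets for free from Theorem \ref{Theorem:DotProd}.
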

\begin{proof}Let $\B$ be a basis of $\R^n$ contained in $\D^n_{n+1}.$ Since $\gamma_{\B}>0,$  if follows that $d^\top u_{\B}<0$ (by Theorem \ref{Theorem:DotProd}) where $d$ is the only vector in $\D^n_{n+1}\setminus \B.$ Therefore, $\p=\gamma_\B$ for all bases $\B \subset \D^n_{n+1}$ and it follows that $$\cm(\D^n_{n+1})=\min_{\B \, \subset \, \D^n_{n+1}} \gamma_\B.$$
\end{proof}


\begin{theorem}Let $\D^n_{2n}=\begin{bmatrix}d_1&d_2&\cdots&d_{2n}\end{bmatrix}$ be a  maximal positive basis of $\R^n.$ Then $$\gamma_\B=\p$$ for all  bases $\B \subset \D^n_{2n}$ where $\gamma_\B$ and $\p$ are defined as in Algorithm \ref{alg:cmpbasis}. Moreover, $$\cm(\D^n_{2n})=\min_{\B \, \subset \, \D^n_{2n}} \gamma_\B.$$
\end{theorem}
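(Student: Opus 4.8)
The plan is to exploit the rigid structure of maximal positive bases supplied by Theorem \ref{thm:maxbasis}. Since $\gamma_{\B}$, $u_{\B}$, and $\p$ all depend only on the directions of the vectors, I would first normalize and assume without loss of generality (exactly as in Proposition \ref{prop:iterationsformaximal}) that $\D^n_{2n}=\begin{bmatrix}d_1&\cdots&d_n&-d_1&\cdots&-d_n\end{bmatrix}$, where $d_1,\dots,d_n$ are unit vectors forming a basis of $\R^n$. The key observation is then that every basis $\B\subset\D^n_{2n}$ has the form $\B=\begin{bmatrix}\epsilon_1 d_1&\cdots&\epsilon_n d_n\end{bmatrix}$ with each $\epsilon_i\in\{+1,-1\}$, and that the remaining $n$ columns of $\D^n_{2n}$ not belonging to $\B$ are precisely the negatives $-\epsilon_1 d_1,\dots,-\epsilon_n d_n$.

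For a fixed such basis $\B$, I would invoke Lemma \ref{Lem:gamma} to obtain the unit vector $u_{\B}$ satisfying $u_{\B}^\top(\epsilon_i d_i)=\gamma_{\B}>0$ for every $i$. The heart of the argument is then a short sign computation: for each column of $\D^n_{2n}$ not lying in $\B$, namely $-\epsilon_i d_i$, we have
$$u_{\B}^\top(-\epsilon_i d_i)=-\gamma_{\B}<0<\gamma_{\B}.$$
Hence the entries of the dot product vector $p_{\B}=u_{\B}^\top\D^n_{2n}$ take only the two values $\gamma_{\B}$ (on the columns in $\B$) and $-\gamma_{\B}$ (on the complementary columns), so that $\p=\max_{1\leq i\leq 2n} p_{\B}^i=\gamma_{\B}$ because $\gamma_{\B}>0$. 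This establishes the first claim for every basis $\B\subset\D^n_{2n}$.

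Finally, having shown $\p=\gamma_{\B}$ for all bases, the identity for the cosine measure is immediate from the correctness of the algorithm: Theorem \ref{thm:mainresult} gives $\cm(\D^n_{2n})=\min_{\B\subset\D^n_{2n}}\p$, and substituting $\p=\gamma_{\B}$ yields $\cm(\D^n_{2n})=\min_{\B\subset\D^n_{2n}}\gamma_{\B}$.

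I do not anticipate a genuine obstacle here, as the result is essentially a corollary of the structure theorem together with the already-proved Theorem \ref{thm:mainresult}. The only point requiring care is the bookkeeping in the first step --- confirming that the complement in $\D^n_{2n}$ of any basis is exactly the collection of sign-flipped basis vectors --- which relies on the normal form of Theorem \ref{thm:maxbasis} and the fact that no basis can contain both $d_i$ and $-d_i$, since they are linearly dependent. Alternatively, one could bypass the explicit sign computation and argue as in the minimal case via Theorem \ref{Theorem:DotProd}(i); however, the direct computation above is cleaner precisely because the complementary vectors are literally negatives of basis vectors.
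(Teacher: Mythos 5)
Your proposal is correct and follows essentially the same route as the paper: normalize via Theorem \ref{thm:maxbasis}, observe that every basis in $\D^n_{2n}$ is a sign pattern $\begin{bmatrix}\pm d_1&\cdots&\pm d_n\end{bmatrix}$ whose complement consists of the negated basis vectors, conclude that the complementary dot products equal $-\gamma_\B<0$ so that $\p=\gamma_\B$, and then pass to the cosine measure via the correctness of Algorithm \ref{alg:cmpbasis}. Your version is marginally more explicit (the $\epsilon_i$ bookkeeping and the explicit appeal to Theorem \ref{thm:mainresult}, which the paper leaves implicit), but the argument is the same.
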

\begin{proof}
Without loss of generality, by Theorem \ref{thm:maxbasis}, let $\D^n_{2n}=\begin{bmatrix}d_1&\cdots&d_n&-d_1&\cdots&-d_n\end{bmatrix}$ be a positive basis of unit vectors for $\R^n$.
Note that every basis  contained in $\D^n_{2n}$ has the form $\begin{bmatrix}\pm d_1& \pm d_2&\dots&\pm d_n\end{bmatrix}$. Hence, without loss of generality  relabelling if necessary, let $\B=\begin{bmatrix}d_1&\cdots&d_n \end{bmatrix}.$ So $$u_{\B}^\top d_1=\dots=u_{\B}^\top d_n=\gamma_{\B}>0.$$
It follows that $u_{\B}^\top(-d_i)<0$ for all $i \in \{1, \dots, n\}.$ Therefore, $\gamma_\B=\p$
for all bases $\B$ contained in $\D^n_{2n}$ and it follows that $$\cm(\D^n_{2n})=\min_{\B \, \subset \, \D^n_{2n}} \gamma_\B.$$ \qed
\end{proof}

A consequence of the previous two theorems is that it is not necessary to compute $p_\B,$ and $\p$ in Algorithm \ref{alg:cmpbasis}. This means that step $(1.3)$ and step $(1.4)$ can be deleted from Algorithm \ref{alg:cmpbasis}.  The cosine measure (step $(2.1)$) and the cosine vector set (step $(2.2)$) can be found by  simply setting $$\cm(\D^n_s)=\min_{\B \, \subset \, \D^n_s} \gamma_\B$$ and $$\V(\D^n_s)=\{ u_{\B}:\gamma_{\B}=\cm(\D^n_s) \}$$ whenever $s=n+1$ or $s=2n.$  Unfortunately, this does not impact the complexity per iteration, as constructing the Graham matrices and the matrix inversions are still required.


The next example shows that the previous abridged algorithm does not guarantee to return the value of the cosine measure for positive bases of intermediate size ($n+1<s<2n$).

\begin{example}[Alg. \ref{alg:cmpbasis} cannot be shortened for all positive bases of intermediate size]

Let $$\D^3_{5}=\begin{bmatrix}1&0&0&-0.8&0\\0&1&0&0&-0.9\\0&0&1&-0.6&-\sqrt{0.18}\end{bmatrix}.$$ Then $\D^3_{5}$ is an intermediate positive basis of $\R^3.$  Computation shows that $$\min_{\B \, \subset \D^3_5}\gamma_\B\approx 0.2038.$$ and the unit vector associated to the minimal $\gamma_{\B}$ is  $u_\B\approx\begin{bmatrix}0.4115&0.2038&-0.8883\end{bmatrix}^\top$ where $$\B=\begin{bmatrix}0&-0.8&0\\1&0&-0.9\\0&-0.6&-\sqrt{0.19}\end{bmatrix}.$$ Computing $p_{\B}$ and $\p$, yields $p_{\B} \approx \begin{bmatrix}0.4115&0.2038&-0.8883&0.2038&0.2038\end{bmatrix}$ and so $$\p\approx0.4115 \neq \gamma_\B.$$
Note that  the cosine measure of $\D^3_{5}$ is found when considering $$\B_*=\begin{bmatrix}1&0&-0.8\\0&1&0\\0&0&-0.6\end{bmatrix}.$$ Thus, $\gamma_{\B_*} \approx 0.3015 $ and $u_{\B_*} \approx \begin{bmatrix}0.3015&0.3015&-0.9045\end{bmatrix}^\top.$ The dot product vector is $p_{\B_*}\approx\begin{bmatrix}0.3015&0.3015&-0.9045&0.3015&0.1229\end{bmatrix}$ and so $$\cm(\D^3_{5})=\mathring{p}_{\B_*}\neq \min_{\B \, \subset \D^3_5}\gamma_\B.$$
\end{example}

\section{Conclusion and open directions} \label{sec:conclusion}
This paper has presented a deterministic algorithm to compute the cosine measure of any positive basis in $\R^n.$  In fact, the algorithm can be applied to any finite positive spanning set of $\R^n.$ One weakness of the algorithm, and a topic to explore, is that the algorithm needs to investigate ${s \choose n}$ sets of vectors and decide if the set is a basis. Indeed, as $s$ increases, this number becomes  extremely large. Hence, creating a computationally inexpensive technique, by exploiting the structure of positive bases, to decide if the set of $n$ vectors is a basis, could speed up the algorithm significantly.

It was found that the algorithm can be slightly shortened when considering a minimal positive basis ($s=n+1$) or a maximal positive basis ($s=2n$).Thereafter, an example was provided demonstrating that the abridged version of the algorithm  is not  valid  for intermediate positive bases.

Moreover, it was showed that a maximal positive basis contained $2^n$ bases of $\R^n$ and a minimal positive basis contained $n+1$ bases of $\R^n.$ The maximal number of bases contained in a positive basis of intermediate size ($n+1<s<2n$) would be valuable to investigate. A better understanding of the structure of intermediate positive bases will certainly help to answer this question.

In 2018, it was rigorously showed, with concepts of matrix algebra, that a maximal positive basis ($s=2n$) has maximal cosine measure $1/\sqrt{n}$ and a minimal positive basis ($s=n+1$) has maximal cosine measure $1/n.$ The positive bases attaining these upper bounds have also been characterized \cite{Naevdal2018}. However, finding the positive basis of intermediate size  with maximal cosine measure is still an open question \cite{Dodangeh2016,Naevdal2018}. Hopefully the algorithm provided in this paper will be useful to answer this question. The algorithm presented in this paper could also be used to find, employing a numerical approach, the maximal cosine measure for a positive spanning set of $2n$ vectors (the maximal cosine measure for a positive basis of size $2n$ is $1/\sqrt{n}$, but this value could be wrong if we consider all the positive spanning sets of size $2n$ instead,  as discussed in \cite{Dodangeh2016}).

\normalsize
\bibliographystyle{siam}
\bibliography{Mybib}
\end{document}